\documentclass[11pt,a4paper]{article}

\usepackage[T1]{fontenc}
\usepackage[margin=1in]{geometry}
\usepackage{amsmath,amssymb,amsthm}
\usepackage{array}
\usepackage{booktabs}
\usepackage{float}
\usepackage{xcolor}
\usepackage{url}
\usepackage[hidelinks]{hyperref}

\newtheorem{theorem}{Theorem}[section]
\newtheorem{lemma}[theorem]{Lemma}
\newtheorem{corollary}[theorem]{Corollary}
\newtheorem{proposition}[theorem]{Proposition}
\newtheorem{conjecture}[theorem]{Conjecture}

\newcommand{\om}{\omega}
\newcommand{\dunion}{\mathbin{\dot\cup}}

\title{An infinite family of minimally nonperfectly divisible graphs
with a bisimplicial vertex}
\author{Lizhong Chen\\
Department of Mathematics, The Hong Kong University of Science and
Technology\\
Clear Water Bay, Kowloon, Hong Kong\\
\texttt{lchendh@connect.ust.hk}}
\date{}

\hypersetup{
  pdftitle={An infinite family of minimally nonperfectly divisible graphs with a bisimplicial vertex},
  pdfauthor={Lizhong Chen}
}

\begin{document}

\maketitle

\begin{abstract}
We disprove Ho\`ang's conjecture that a minimally nonperfectly divisible
graph cannot contain a bisimplicial vertex by constructing an explicit
infinite family. For every integer \(t\geq1\), the graph \(G_t\) in this
family has clique number three, contains a bisimplicial vertex of degree
four, and satisfies
\[
 |V(G_t)|=93+30(t-1),\qquad |E(G_t)|=320+104(t-1).
\]
In particular, the members are pairwise nonisomorphic. The construction
uses a fixed 15-vertex rooted graph and a variable auxiliary graph. Every
copy of the rooted graph forces its identified root into the perfect part
of every perfect division. Three induced odd holes in the auxiliary graph then
force a triangle into the other part. A uniform assignment lemma and a
rooted product lemma show symbolically that every proper induced subgraph
of every \(G_t\) is perfectly divisible. The finite properties of the
fixed rooted graph are verified by exact exhaustive computation, with an
independent implementation providing a cross-check. The construction also
gives an infinite family of negative examples to a prescribed-vertex
problem of Hu, Xu and Zhuang.
\end{abstract}

\medskip
\noindent\textbf{Keywords.}
Perfect divisibility; minimally nonperfectly divisible graph;
bisimplicial vertex; rooted product; perfect graph; computer-assisted proof.

\smallskip
\noindent\textbf{Mathematics Subject Classification (2020).}
05C17; 05C15; 05C75.

\section{Introduction}

All graphs considered in this paper are finite and simple. We follow West
\cite{West2001} for undefined graph-theoretic notation and terminology. For
a graph \(G\), let \(\chi(G)\) and \(\om(G)\) denote its chromatic number
and clique number, respectively. A graph \(G\) is perfect if
\(\chi(H)=\om(H)\) for every induced subgraph \(H\) of \(G\).

Ho\`ang introduced perfect divisibility in connection with the study of
\(\chi\)-bounded graph classes \cite{Hoang2018}. Chudnovsky and Sivaraman
subsequently established perfect divisibility for several graph classes
and observed that it gives a quadratic upper bound for the chromatic number
\cite{ChudnovskySivaraman2019}. Following Hu, Xu and
Zhuang \cite{HuXuZhuang2026}, a \emph{perfect division} of a graph \(H\)
is a partition
\[
 V(H)=A\dunion B
\]
such that \(H[A]\) is perfect and
\[
\om(H[B])<\om(H).
\]
We refer to \(A\) as the \emph{perfect part}, following the same source.
Ho\`ang uses the term
\emph{good partition} for the same object \cite{Hoang2026}. A graph \(G\)
is perfectly divisible if every induced subgraph of \(G\) with at least
one edge has a perfect division. This is equivalent to the definition
using all nonempty induced subgraphs, because a nonempty edgeless graph
has the perfect division \((V(H),\varnothing)\). Here and throughout, we
use the convention \(\om(\varnothing)=0\).

A graph is minimally nonperfectly divisible (an MNPD graph) if it is not
perfectly divisible but each of its proper induced subgraphs is perfectly
divisible \cite{Hoang2026,HuXuZhuang2026}. A vertex \(v\) is
\emph{bisimplicial} if its neighbourhood is the union of two cliques
\cite{ChudnovskySeymour2023}. Bisimplicial vertices play an important
role in even-hole-free graphs; Chudnovsky and Seymour proved that every
nonempty even-hole-free graph contains a bisimplicial vertex
\cite[Theorem~1.1]{ChudnovskySeymour2023}. Ho\`ang proposed the following
conjecture.

\begin{conjecture}\label{conj:bisimplicial}
No MNPD graph contains a bisimplicial vertex
\cite[Conjecture 4.5]{Hoang2026}.
\end{conjecture}

This conjecture extends the corresponding restriction for simplicial
vertices: an MNPD graph has no simplicial vertex
\cite[Lemma~2.2]{Hoang2026}. The distinction between these two local
structures is therefore essential. The construction below also exhibits a
reusable prescribed-partition mechanism: a rooted graph can force its root
into one part of every division while its proper induced subgraphs retain
enough flexibility to place the root in either part.

Our main result shows that Conjecture \ref{conj:bisimplicial} fails for
infinitely many pairwise nonisomorphic graphs.

\begin{theorem}\label{thm:main}
There is an explicit sequence \((G_t,v_t)\), \(t=1,2,\ldots\), of pairwise
nonisomorphic graphs such that, for every \(t\geq1\),
\[
 G_t\text{ is MNPD},\qquad \om(G_t)=3,
\]
and \(v_t\) is a bisimplicial vertex of degree four. Moreover,
\[
 |V(G_t)|=93+30(t-1),\qquad
 |E(G_t)|=320+104(t-1).
\]
\end{theorem}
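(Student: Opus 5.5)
We follow the architecture sketched in the abstract. We fix a 15-vertex rooted graph \((R,r)\) with \(\om(R)\le 3\) and an auxiliary graph \(H_t\) with clique number three. \(H_t\) contains three distinguished induced odd holes, arranged so that the vertex \(v_t\) has degree four with neighbourhood the union of two disjoint edges. The graph \(G_t\) is obtained by attaching copies of \((R,r)\) at selected vertices of \(H_t\), identifying each root with a vertex of \(H_t\). Increasing \(t\) adds a periodic block of \(H_t\) together with the corresponding rooted copies. This accounts for the increments of \(30\) vertices and \(104\) edges, and the vertex and edge formulas follow by direct counting. Pairwise nonisomorphism then follows from the distinct vertex counts, and \(\om(G_t)=3\) follows because cliques lie inside a single block of the gluing, since the gluing is at cut vertices.

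\emph{Non-divisibility.} We show \(G_t\) itself has no perfect division. The finite property of \((R,r)\) needed here is the following, checked by exhaustive computation: in every partition \(V(R)=A\dunion B\) with \(R[A]\) perfect and \(R[B]\) triangle-free, the root \(r\) lies in \(A\). Now suppose \(V(G_t)=A\dunion B\) is a perfect division of \(G_t\). Restricting it to each copy of \(R\) forces every identified root into \(A\). We then place the roots on the three odd holes of \(H_t\) so that each hole has all but a few vertices forced into \(A\). Since \(G_t[A]\) is perfect, \(A\) cannot contain a whole odd hole, so each hole must send a vertex to \(B\). The geometry of \(H_t\) is chosen so that the only available vertices are three mutually adjacent ones, possibly including neighbours of \(v_t\). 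These three vertices then form a triangle in \(B\), which contradicts \(\om(G_t[B])<3\).

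\emph{Minimality.} We must show \(G_t-x\) is perfectly divisible for every vertex \(x\). Since perfect divisibility is hereditary and every proper induced subgraph lies in some \(G_t-x\), this suffices. The proof uses two lemmas. The rooted product lemma states that if the base graph has a perfect division in which each attachment vertex lies in the part demanded, and each rooted copy admits divisions with its root in either prescribed part, then the divisions glue. Gluing is valid because holes and antiholes do not cross cut vertices, so perfection of \(A\) is checked blockwise, and triangles lie in blocks. The second finite fact about \((R,r)\), again verified by computation and an independent implementation, is that every induced subgraph of \(R\), and every proper induced subgraph, admits perfect divisions with the root in \(A\) and also with the root in \(B\), whenever \(\om=3\). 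The uniform assignment lemma handles \(H_t\). Once some vertex \(x\) is deleted, either a hole is broken or the forcing from a copy of \(R\) disappears. In either case it produces, uniformly in \(t\), an explicit 2-part assignment of \(H_t-x\) with no triangle in \(B\) and a perfect \(A\). The assignment is verified by exhibiting that \(A\) induces a forest-like or chordal bipartite structure with no odd hole or antihole. One then applies this to all induced subgraphs, reducing to the cases where the induced subgraph has clique number \(3\); the clique-number-\(\le2\) case is easy by taking a maximal stable set as \(B\).

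\emph{Main obstacle.} The hardest step is the uniform assignment lemma. It requires a case analysis on the position of \(x\) (in the periodic part versus the fixed ends) that works for all \(t\) simultaneously. It must also check that \(A\) stays perfect, which means excluding long odd holes that wind through the periodic part. My first approach would be to make \(A\) bipartite plus pendant structure, so that perfection is immediate, and to exploit the periodicity to reduce to finitely many local cases.
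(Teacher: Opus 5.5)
Your outline has the same architecture as the paper: copies of a rooted graph force their roots into $A$, three odd holes force a triangle into $B$, the pieces are glued at cut vertices, and a uniform assignment handles the base graph. As a proof, however, it has a basic gap. The theorem asserts an explicit family with exact parameters, and you never construct either ingredient. With no $(R,r)$ and no $H_t$, the orders and sizes, the degree-four bisimplicial vertex, the forcing step and the assignment lemma are asserted rather than proved. Even the increments $30$ and $104$ cannot be derived.

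In the paper the rooted graph is the specific $F$ with root $3$ ($15$ vertices, $51$ edges, $\omega(F)=3$). The base graph $Q_t$ is the tree $T_t$ together with the triangle $K=\{b,c,v\}$ and the six edges $ab,ub,dc,wc,av,dv$. Here $T_t$ is the path $a$--$x$--$P_t$--$y$--$d$, with $P_t$ of length $2t-1$, plus a leaf $u$ at $y$ and a leaf $w$ at $x$.
\begin{itemize}
\item The counts follow from $|V(Q_t)|=2t+7$, $|E(Q_t)|=2t+12$, and $2t+4$ copies of $F$, each adding $14$ vertices and $51$ edges. One period is two new path vertices, which keeps the holes $C_b,C_c,C_v$ odd (length $2t+3$).
\item Each hole has every vertex except exactly one in $R_t$. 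Your ``all but a few'' would not determine which vertex goes to $B$.
\item $v_t$ is itself one of the three forced vertices, and $N(v_t)$ induces the path $a$--$b$--$c$--$d$, not two disjoint edges.
\item The step you call the main obstacle is short because of this design. Every vertex outside $K$ is a root and $Q_t[R_t]$ is a tree. If a root $q$ is deleted or its copy is not full, choose $z(q)\in K$ with $q$ on the tree path joining the two tree-neighbours of $z(q)$. Then $(R_t\setminus\{q\})\cup\{z(q)\}$ induces a forest, and $K\setminus\{z(q)\}$, together with $q$ if present, is triangle-free. No periodicity argument is needed.
\end{itemize}

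Three steps of your minimality argument are also wrong as written.

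First, your flexibility property is claimed for every induced subgraph of $R$ with $\omega=3$. That includes $R$ itself, which contradicts the forced-root property. It must be restricted to proper induced subgraphs containing $r$.

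Second, the base partition cannot in general be a perfect division of the base graph. For $J=G_t-v$ every copy of $F$ is full, so every root lies in $A$. Then $C_b$ and $C_c$ force $b,c\in B$. Since $Q_t-v$ is triangle-free, no perfect division of $Q_t-v$ has all roots in its perfect part. The correct requirement is that $Q_t[A_Q]$ be perfect and $\omega(Q_t[B_Q])\le 2$, measured against $\omega(J)=3$, as in condition (iii) of Lemma~\ref{lem:rooted-product}.

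Third, the triangle-free case is not settled by taking a maximal stable set as $B$. There $B$ must be independent and $J-B$ bipartite, and a maximal stable set can leave an odd hole. Inside $G_t$, take $C_b$, one neighbour of each root of $C_b$ inside its copy of $F$, and $c$, but not $d,w,v$. This is an odd hole with a pendant vertex at each of its vertices. The pendant vertices form a maximal stable set, and deleting them leaves $C_b$. What you need is a $3$-colouring of $J$ with one colour class as $B$. The paper takes $B_Q=Z\cap L_3$ from the explicit $3$-colouring $L_1,L_2,L_3$ of $Q_t$, and glues it to root-consistent divisions of the pieces of the copies of $F$. Equivalently, since $F$ is perfectly divisible, its triangle-free induced subgraphs are $3$-colourable, and these colourings can be glued at the cut vertices after permuting colours.

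Your reduction of the case $\omega(J)=3$ to restricting a single division of $G_t-x$ is fine.
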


The construction also settles a prescribed-vertex problem of Hu, Xu and
Zhuang \cite{HuXuZhuang2026} in a stronger form.

\begin{corollary}\label{cor:intro-prescribed}
There is an explicit sequence \((H_t)_{t\geq1}\) of pairwise
nonisomorphic perfectly divisible graphs such that each \(H_t\) contains
adjacent vertices \(b,c\) with the following property: in every perfect
division \((A,B)\) of \(H_t\),
\[
 b,c\in B.
\]
In particular, there need not exist a perfect division with a prescribed
vertex in its perfect part.
\end{corollary}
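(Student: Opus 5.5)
The corollary should follow from Theorem~\ref{thm:main} by deleting the bisimplicial vertex. I would take \(H_t:=G_t-v_t\). Since \(G_t\) is MNPD, every proper induced subgraph of \(G_t\) is perfectly divisible. Hence \(H_t\) is perfectly divisible, and so is every induced subgraph of it.

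The plan for forcing \(b,c\in B\) is as follows. The vertex \(v_t\) has degree four and its neighbourhood is the union of two cliques. Since \(\om(G_t)=3\), these cliques are edges; write them as \(\{a,a'\}\) and \(\{b,c\}\). The construction presumably makes \(\{b,c\}\) (or one of the two neighbour cliques) the relevant pair, and I would identify it from the explicit description.

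I would argue by contradiction. Suppose \((A,B)\) is a perfect division of \(H_t\) with, say, \(b\in A\). I would try to extend it to a perfect division of \(G_t\), which contradicts \(G_t\) not being perfectly divisible. Note that \(G_t\) itself has an edge, so \(G_t\) has no perfect division at all, since otherwise every induced subgraph would be fine. I expect \(\om(H_t)=3\), and I would check this using the rooted gadgets, which contain triangles away from \(v_t\). So \(B\) is triangle-free.

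There are two ways to extend:
\begin{itemize}
\item Put \(v_t\) into \(B\). This works unless \(v_t\) forms a triangle with a pair in \(B\), namely \(\{a,a'\}\subseteq B\) or \(\{b,c\}\subseteq B\).
\item Put \(v_t\) into \(A\). Adding a vertex whose neighbourhood in \(A\) is a clique (a simplicial vertex) preserves perfectness, because simplicial vertices are harmless. So this works if \(N(v_t)\cap A\) is a clique.
\end{itemize}

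The main obstacle is exactly this case analysis. If \(b\in A\), then for \(v_t\in B\) to fail we need \(\{a,a'\}\subseteq B\). In that case \(N(v_t)\cap A\subseteq\{b,c\}\), which is a clique, so \(v_t\in A\) works. The remaining bad case, when neither extension works, is when \(\{b,c\}\subseteq B\) and \(N(v_t)\cap A\) is not a clique. That forces \(b,c\in B\) (or symmetrically \(a,a'\in B\)).

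So the argument shows that every perfect division of \(H_t\) puts one of the two neighbour edges entirely into \(B\). To conclude that it is specifically \(\{b,c\}\), I would use the paper's structure: the roots of the rooted gadgets are forced into \(A\). If \(a\) is such a forced root, then \(\{a,a'\}\not\subseteq B\), and hence \(b,c\in B\) in every division.

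Pairwise nonisomorphism follows from the vertex counts \(|V(H_t)|=92+30(t-1)\). For the final sentence, take the prescribed vertex to be \(b\): no perfect division of \(H_t\) has \(b\) in its perfect part.
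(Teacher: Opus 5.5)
Your choice \(H_t=G_t-v_t\) and the idea of extending a division of \(H_t\) back to \(G_t\) are sound. The gap is that the case analysis rests on a wrong picture of \(N(v_t)\). You assume the neighbourhood is two disjoint edges \(\{a,a'\}\) and \(\{b,c\}\) with no edges between them, so that the only triangles through \(v_t\) come from those two cliques. In \(G_t\), however, \(N(v_t)=\{a,b,c,d\}\) induces the path \(a-b-c-d\). The two cliques of the bisimplicial decomposition are \(\{a,b\}\) and \(\{c,d\}\), and the pair \(\{b,c\}\) that must be forced into \(B\) is the edge \emph{between} them. So there are three triangles through \(v_t\), namely \(v_tab\), \(v_tbc\), \(v_tcd\), and your claim that ``\(v_t\in B\) works unless \(\{a,a'\}\subseteq B\) or \(\{b,c\}\subseteq B\)'' omits one. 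Your intermediate conclusion, that every perfect division of \(H_t\) puts one of the two neighbour cliques into \(B\), is actually false here. In every perfect division, \(a,d\in A\), so neither \(\{a,b\}\) nor \(\{c,d\}\) lies in \(B\). Hence the final step, which deduces \(b,c\in B\) from a single forced root, does not go through as written.

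The repair is short and makes the \(v_t\in A\) branch unnecessary. Both \(a\) and \(d\) are roots of full copies of \(F\) inside \(H_t\), and \(\omega(H_t)=3\). Restricting a perfect division \((A,B)\) of \(H_t\) to those copies and applying the forced-root property of Lemma~\ref{lem:rooted-factor} gives \(a,d\in A\). Now consider the partition \((A,B\cup\{v_t\})\) of \(G_t\). Its perfect part \(G_t[A]=H_t[A]\) is perfect, so by Lemma~\ref{lem:no-division} the set \(B\cup\{v_t\}\) must contain a triangle. Since \(B\) is triangle-free, that triangle is \(v_tab\), \(v_tbc\) or \(v_tcd\), and the first and last are excluded by \(a,d\in A\). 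Thus \(b,c\in B\).

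This repaired route differs from the paper's, which stays inside \(H_t\). The paper forces all of \(R_t\) into \(A\) and observes that the odd holes \(C_b\) and \(C_c\) avoid \(v_t\). Then \(b\in A\) or \(c\in A\) would place an entire odd hole in the perfect part. Your version needs only the two forced roots \(a,d\) in \(H_t\). The price is that it invokes the nonexistence of a perfect division of \(G_t\) as a black box, and that fact itself uses all three holes.
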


The construction has a fixed part and a variable part. The fixed part is
a 15-vertex rooted graph \((F,r)\). Every perfect division of \(F\) puts
the root in the perfect part, while each proper induced subgraph containing
the root and an edge admits suitable divisions with the root in either
part. The variable part is an auxiliary graph \(Q_t\). It consists of a
tree and a triangle joined so that the tree contains the three paths needed
to form induced odd holes. Attaching a copy of \((F,r)\) at each vertex of
the tree forces all tree vertices into the perfect part. The three odd
holes then force the triangle into the other part, so the resulting graph
has no perfect division.

The main point is to prove that every proper induced subgraph is perfectly
divisible. We first prove a general rooted product lemma. We then give a
uniform assignment lemma for every
\(Q_t\), based on the unique-path property of a tree. Together these
lemmas give a perfect division for every proper induced subgraph of every
\(G_t\). The finite properties of \((F,r)\) are the only
computer-assisted input. The construction of \(Q_t\), the all-parameter
assignment lemma, and the proof for all \(t\geq1\) are symbolic.

This paper is organized as follows. Section \ref{sec:preliminaries}
introduces rooted products and the fixed rooted graph. Section
\ref{sec:product} proves the rooted product lemma. Section
\ref{sec:auxiliary} defines \(Q_t\) and proves the uniform assignment
lemma. In Section \ref{sec:family}, we construct \(G_t\) and prove
Theorem \ref{thm:main}. Section \ref{sec:prescribed} proves Corollary
\ref{cor:intro-prescribed}. The exact verification and the adjacency list
of \(F\) are given in the appendices.

\section{Preliminaries and the rooted graph}
\label{sec:preliminaries}\label{sec:rooted}

For \(X\subseteq V(G)\), let \(G[X]\) denote the subgraph of \(G\)
induced by \(X\). The neighbourhood of a vertex \(v\) is denoted by
\(N_G(v)\), or simply \(N(v)\) when \(G\) is clear from context. A set
of pairwise nonadjacent vertices is called an independent set. A hole is
an induced cycle of length at least four, and an antihole is the complement
of a hole. An odd hole is a hole of odd length, and an odd antihole is the
complement of an odd hole.

We use the Strong Perfect Graph Theorem of Chudnovsky, Robertson, Seymour
and Thomas \cite{ChudnovskyRobertsonSeymourThomas2006}.

\begin{theorem}[Strong Perfect Graph Theorem]\label{thm:spgt}
A graph is perfect if and only if it contains no odd hole and no odd
antihole.
\end{theorem}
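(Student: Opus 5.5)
We will not reprove Theorem~\ref{thm:spgt}; the paper invokes it as a black box, citing \cite{ChudnovskyRobertsonSeymourThomas2006}. For completeness, here is the proof strategy we would follow. It splits into an easy direction and a deep one.

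\emph{Necessity.} Since perfection is hereditary, it suffices to show that odd holes and odd antiholes are themselves imperfect. An odd hole \(C_{2k+1}\) with \(k\geq2\) has \(\om=2\) but is not bipartite, so \(\chi=3\). Its complement \(\overline{C_{2k+1}}\) has \(\om=k\). Its independent sets have size at most \(2\), since they are cliques of \(C_{2k+1}\). Hence \(\chi\geq\lceil(2k+1)/2\rceil=k+1>\om\).

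\emph{Sufficiency.} Call a graph Berge if it has no odd hole and no odd antihole. We would argue by minimal counterexample: let \(G\) be a Berge graph that is not perfect, with \(|V(G)|\) minimal. Then every proper induced subgraph of \(G\) is perfect. We would use Lov\'asz's Perfect Graph Theorem, that complements of perfect graphs are perfect; it makes the argument self-complementary, so \(\overline{G}\) is also minimally imperfect. We would also use standard facts about minimally imperfect graphs, for instance that they have no star cutset (Chv\'atal) and, more generally, no balanced skew partition.

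The core is the structural decomposition theorem for Berge graphs. Every Berge graph is either basic, or admits a proper \(2\)-join, a \(2\)-join in its complement, or a balanced skew partition. Here basic means bipartite, the complement of a bipartite graph, the line graph of a bipartite graph, the complement of such a line graph, or a double split graph. Given this theorem, we would finish in three steps. First, check that every basic class is perfect: by K\"onig's edge-colouring theorem and its complements, plus a direct argument for double split graphs. Second, show that a minimally imperfect graph admits no \(2\)-join and, by complementation, no complement \(2\)-join. For this we would build a smaller Berge graph from the blocks of the \(2\)-join, replacing each side by a short path of suitable parity, and transfer colourings between it and \(G\). Third, show that no minimally imperfect graph has a balanced skew partition. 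Together these contradict the choice of \(G\).

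The main obstacle is, of course, the decomposition theorem itself. Its proof is a very long case analysis. It grows the graph from certain configurations that a Berge graph must contain if it is not basic: a line graph of a bipartite subdivision of \(K_4\), a prism, a long even prism, or wheels. For each configuration it shows that attachments either extend it to a basic graph or produce one of the three decompositions. We would follow the published proof rather than attempt a new one.
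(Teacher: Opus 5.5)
Your proposal is correct and takes the same approach as the paper, which gives no proof of Theorem~\ref{thm:spgt} and simply cites Chudnovsky, Robertson, Seymour and Thomas. Your necessity argument is right, and your sufficiency outline follows their decomposition strategy. One attribution nuance: the decomposition theorem proved in the Annals paper also allows a proper homogeneous pair, which is excluded for minimally imperfect graphs by Chv\'atal and Sbihi, while the version you state without it is Chudnovsky's later strengthening, so following the published proof literally would add that fourth case to your finish.
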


A \emph{rooted graph} is a pair \((D,z)\), where \(D\) is a graph and
\(z\in V(D)\) is its \emph{root}. Let \(Q\) be a labelled graph with
vertices \(q_1,\ldots,q_n\), and let
\((D_1,z_1),\ldots,(D_n,z_n)\) be rooted graphs on pairwise disjoint
vertex sets, all disjoint from \(V(Q)\). The \emph{rooted product} of
\(Q\) by this sequence is the
graph obtained by identifying \(z_i\) with \(q_i\) for every \(i\), and
adding no other edges. This is the rooted product of Godsil and McKay
\cite[Definition~1.1]{GodsilMcKay1978}. We call the image of \(D_i\) the
\emph{rooted factor} at \(q_i\), and we permit a rooted factor to be a
single vertex. We continue to denote this image by \(D_i\).

We will also use the standard fact that perfect graphs are closed under
pasting along a clique \cite{Trotignon2015}. More precisely, suppose that
\(G_1\) and \(G_2\) are perfect graphs, their intersection induces the
same complete graph in both, possibly the empty graph, and there is no edge
between their remaining vertex sets. Then \(G_1\cup G_2\) is perfect.

Let \(F\) be the graph on vertex set \(\{0,1,\ldots,14\}\) whose
encoding in the \texttt{graph6} format \cite{McKayPiperno2014} is
\[
 \verb|Nhru`dwjS_yLMeF@bv?|,
\]
and let \(r=3\). The complete adjacency list of \(F\) is given in
Appendix \ref{app:rooted-adjacency}. The vertices
\[
 7,9,12,13,11
\]
induce the 5-cycle \(7-9-12-13-11-7\). Thus the imperfection of \(F\)
has a direct certificate independent of the computation below.

\begin{lemma}\label{lem:rooted-factor}
The rooted graph \((F,r)\) has the following properties.
\begin{enumerate}
  \item The graph \(F\) has 15 vertices, 51 edges, clique number three,
  and is imperfect and perfectly divisible.
  \item In every perfect division \((A,B)\) of \(F\), the root belongs
  to the perfect part \(A\).
  \item If \(X\subsetneq V(F)\), \(r\in X\), and \(F[X]\) has an edge,
  then \(F[X]\) has a perfect division with \(r\) in the perfect part
  and another perfect division with \(r\) in the other part.
\end{enumerate}
\end{lemma}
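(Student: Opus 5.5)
The proof of Lemma~\ref{lem:rooted-factor} will be an exact finite verification, organized so that each step reduces to a small, checkable computation on the 15-vertex graph \(F\). First I will decode the \texttt{graph6} string and confirm the counts \(|V(F)|=15\) and \(|E(F)|=51\) against the adjacency list in Appendix~\ref{app:rooted-adjacency}. I will then compute \(\om(F)=3\) by enumerating all 4-subsets and checking that none is a clique, while exhibiting some triangle. Imperfection needs no computation, since the 5-hole \(7-9-12-13-11-7\) is induced.

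For every \(X\subseteq V(F)\) I will test perfection of \(F[X]\) via Theorem~\ref{thm:spgt}. There are at most \(2^{15}\) subsets. Odd holes of length at least \(5\) can be found by a direct induced-cycle search. Odd antiholes of length \(\ge 7\) contain a triangle only if their length is \(\geq 7\); note that \(\overline{C_7}\) has clique number \(3\), so it is not excluded outright and must also be searched for. It is simplest to precompute a boolean table \(\mathrm{perf}[X]\) for all \(2^{15}\) masks. A more robust route is to compute \(\chi\) and \(\om\) directly and set \(\mathrm{perf}[X]\) to be true exactly when \(\chi(F[X])=\om(F[X])\) and \(\mathrm{perf}[X\setminus\{x\}]\) holds for every \(x\in X\). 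I will also tabulate \(\om[X]\).

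With these tables, a perfect division of \(F[X]\) is a subset \(A\subseteq X\) with \(\mathrm{perf}[A]\) and \(\om[X\setminus A]<\om[X]\). I will enumerate all pairs \((X,A)\) with \(A\subseteq X\), which is \(3^{15}\approx 1.4\times10^7\) pairs and feasible. For each \(X\) containing an edge I will record two flags: whether some division has \(r\in A\), and whether some division has \(r\notin A\). The three claims then read off directly:
\begin{itemize}
  \item \emph{perfect divisibility} of \(F\): every \(X\) with an edge, including \(X=V(F)\), has at least one flag set;
  \item \emph{item 2}: for \(X=V(F)\) the flag ``\(r\notin A\)'' is false;
  \item \emph{item 3}: for every proper \(X\ni r\) with an edge, both flags are true.
\end{itemize}

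For item~2 I will also try to extract a human-readable reason, since it is the conceptual heart of the gadget. Suppose \(r\in B\). Then \(B\) must be triangle-free, so \(A\) must meet every triangle through \(r\). This forces certain neighbours of \(r\) into \(A\), and one then hopes to force an odd hole inside \(A\). If no clean argument emerges, I will rely on the exhaustive check.

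The main obstacle is reliability rather than difficulty. The hardest step is making the perfection test correct, in particular not forgetting \(\overline{C_7}\) when using Theorem~\ref{thm:spgt}. I would therefore cross-check by two independent methods: a direct \(\chi=\om\) hereditary computation, and an odd-hole/odd-antihole search. I would also cross-check the division search with a second implementation, matching the independent cross-check mentioned in the abstract.
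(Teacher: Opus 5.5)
Your plan is correct and is essentially the paper's proof: the induced 5-cycle certifies imperfection, and everything else is an exhaustive exact enumeration of all candidate divisions of all induced subgraphs of \(F\), with perfection recognised via Theorem~\ref{thm:spgt} and cross-checked by an independent \(\chi=\om\) computation on every induced subgraph, as in Appendix~\ref{app:verification}. One small correction: your sentence on antiholes is garbled, and the precise point is that \(\overline{C_{2k+1}}\) has clique number \(k\), so since \(\om(F)=3\) the only odd antihole other than \(C_5\) that can occur is \(\overline{C_7}\).
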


\begin{proof}
The displayed induced 5-cycle shows that \(F\) is imperfect. All remaining
assertions were verified by exhaustive computation with exact integer
arithmetic. The verification program uses Theorem \ref{thm:spgt} to
recognise perfect induced subgraphs and enumerates all candidate divisions.
It finds exactly 508 perfect divisions of \(F\), each with \(r\) in the
perfect part. It also verifies, for all 16,377 proper induced subgraphs
\(F[X]\) with \(r\in X\) and at least one edge, the existence of one
division with \(r\) in the perfect part and another with \(r\) in the
other part. Independent implementations and reproducibility details are
given in Appendix \ref{app:verification}.
\end{proof}

\section{A rooted product lemma}\label{sec:product}

The next lemma transfers local divisions of rooted factors to every proper
induced subgraph of a rooted product.

\begin{lemma}\label{lem:rooted-product}
Let \(Q^\ast\) be a graph, let
\(R=\{q_1,\ldots,q_m\}\subseteq V(Q^\ast)\), and let
\((D_1,z_1),\ldots,(D_m,z_m)\) be rooted graphs whose vertex sets are
pairwise disjoint and disjoint from \(V(Q^\ast)\). Let \(C\) be obtained by
identifying \(z_i\) with \(q_i\) for every \(i\), with a single-vertex
factor at each vertex of \(V(Q^\ast)\setminus R\). Suppose that
\(\om(Q^\ast)\leq3\) and the following conditions hold.
\begin{enumerate}
  \item For every \(i\), the graph \(D_i\) has clique number three, is
  perfectly divisible, and every perfect division of \(D_i\) puts \(z_i\)
  in the perfect part.
  \item If \(X\subsetneq V(D_i)\), \(z_i\in X\), and \(D_i[X]\) has an
  edge, then \(D_i[X]\) has a perfect division putting \(z_i\) in either
  prescribed part.
  \item For every \(Z\subseteq V(Q^\ast)\) and
  \(M\subseteq Z\cap R\), there is a partition
  \(Z=A_Q\dunion B_Q\) such that
  \[
    Q^\ast[A_Q]\text{ is perfect},\qquad
    \om(Q^\ast[B_Q])\leq2,\qquad M\subseteq A_Q,
  \]
  except when \(Z=V(Q^\ast)\) and \(M=R\).
  \item For every \(Z\subseteq V(Q^\ast)\), there is a partition
  \(Z=A_Q\dunion B_Q\) such that \(Q^\ast[A_Q]\) is perfect and
  \(B_Q\) is an independent set.
\end{enumerate}
Then every proper induced subgraph of \(C\) with at least one edge has a
perfect division.
\end{lemma}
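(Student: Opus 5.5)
Write \(H=C[Y]\) for a proper induced subgraph with an edge, \(Y\subsetneq V(C)\). Put \(Z=Y\cap V(Q^\ast)\) and \(X_i=Y\cap V(D_i)\) for each \(i\). The plan is to choose a partition of \(Z\) from condition 3 or 4, choose a partition of each \(X_i\) compatible with it at the root, and glue the pieces. Gluing works because of the block structure of \(C\).

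\textbf{Structure.} Each \(D_i\) meets the rest of \(C\) only in the vertex \(q_i=z_i\). Hence every \(q_i\) is a cut vertex separating \(V(D_i)\setminus\{z_i\}\) from everything else, and every clique of \(C\) lies inside \(Q^\ast\) or inside a single \(D_i\). In particular \(\om(C)\leq3\).

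\textbf{Gluing principle.} Suppose \(A\) (resp.\ \(B\)) is assembled from pieces \(A_Q\subseteq Z\) and \(A_i\subseteq X_i\) (resp.\ \(B_Q\), \(B_i\)), where any two pieces meet in at most one root vertex. Then by repeated pasting along a clique of size at most one, \(C[A]\) is perfect whenever every piece is perfect. Also \(\om(C[B])\) is the maximum of the clique numbers of the pieces. These are the only two facts about \(C\) needed.

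\textbf{Case \(\om(H)=3\).} Let \(M\) be the set of \(q_i\in Z\cap R\) with \(X_i=V(D_i)\). We cannot have both \(Z=V(Q^\ast)\) and \(M=R\), since together with the single-vertex factors this would give \(Y=V(C)\). So condition 3 gives \(Z=A_Q\dunion B_Q\) with \(Q^\ast[A_Q]\) perfect, \(\om(Q^\ast[B_Q])\leq2\) and \(M\subseteq A_Q\). Each factor is handled as follows.
\begin{enumerate}
  \item If \(q_i\in M\), take any perfect division of \(D_i\). By condition 1 it exists and puts \(z_i\) in the perfect part, which agrees with \(q_i\in A_Q\). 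Its other part has clique number at most \(2\).
  \item If \(q_i\in Z\setminus M\) and \(D_i[X_i]\) has an edge, condition 2 gives a perfect division of \(D_i[X_i]\) with \(z_i\) on the side prescribed by \((A_Q,B_Q)\). Its other part has clique number less than \(\om(D_i[X_i])\leq3\).
  \item If \(D_i[X_i]\) is edgeless, put \(X_i\setminus\{z_i\}\) into \(A\). The piece of \(A\) inside \(D_i\) is then a star or an edgeless graph, hence perfect.
  \item If \(q_i\notin Z\), then \(D_i[X_i]\) is an induced subgraph of the perfectly divisible graph \(D_i\). If it has an edge, take any perfect division of it; otherwise put \(X_i\) into \(A\). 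Either way the other part has clique number at most \(2\).
\end{enumerate}
By the gluing principle, the resulting \(A\) is perfect and \(\om(C[B])\leq2<3\).

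\textbf{Case \(\om(H)=2\).} This case needs \(B\) to be independent. Use condition 4 to split \(Z\) with \(B_Q\) independent. No \(X_i\) equals \(V(D_i)\), because \(\om(D_i)=3\). So whenever \(D_i[X_i]\) has an edge, condition 2 applies with the root on the prescribed side. It gives \(\om(B_i)<\om(D_i[X_i])=2\), so \(B_i\) is independent. Edgeless factors and factors with \(q_i\notin Z\) are treated as in the previous case, and gluing again yields a perfect division.

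\textbf{Main obstacle.} The delicate step is the gluing principle. One must check that an odd hole or odd antihole in \(C[A]\) cannot pass through the cut vertex \(q_i\) using vertices from both sides; this is exactly what pasting along a clique of size at most one guarantees. One must also make sure that the root's side is consistent between the \(Q^\ast\)-partition and the factor partition. Condition 2 supplies this consistency for proper parts, and condition 1 supplies it for full factors, which is why condition 3 must force \(M\subseteq A_Q\).
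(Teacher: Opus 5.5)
Your proof is correct and matches the paper's argument essentially step for step. It uses the same split on $\om(H)\in\{2,3\}$ and the same set $M$ of fully present factors fed into condition 3, with condition 4 when $\om(H)=2$. The local divisions are chosen by the same four cases (full factor, proper factor containing the root, rootless factor, edgeless factor) with the root kept on a consistent side, and the pieces are glued by the same clique-pasting and clique-localisation argument. The only point you leave slightly implicit is that in the $\om(H)=2$ case the rootless factors also get an independent $B_i$; this is immediate because $\om(D_i[X_i])\le 2$ there.
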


\begin{proof}
Every clique of \(C\) is contained in \(Q^\ast\) or in one of the
graphs \(D_i\). Hence \(\om(C)\leq3\). Let \(J\) be a proper induced
subgraph of \(C\) with at least one edge, put
\[
 k=\om(J),\qquad Z=V(J)\cap V(Q^\ast),
\]
and write \(J_i=J[V(J)\cap V(D_i)]\). Thus \(k\in\{2,3\}\).

First suppose that \(k=3\), and let \(M\) be the set of roots \(q_i\)
for which \(J_i=D_i\). Apply condition (iii) to \(Z\) and \(M\). Its
exceptional case would imply that \(Q^\ast\) and every \(D_i\) are
entirely present in \(J\), contrary to \(J\neq C\). We therefore obtain
a partition \(Z=A_Q\dunion B_Q\) satisfying condition (iii).

For every edge-containing \(J_i\), choose a perfect division
\(V(J_i)=A_i\dunion B_i\) as follows. If \(J_i=D_i\), then
\(q_i\in M\subseteq A_Q\), and condition (i) supplies a division with
\(q_i\in A_i\). If \(J_i\neq D_i\) and \(q_i\in V(J_i)\), use condition
(ii) to put \(q_i\) in the same part as in \(A_Q\dunion B_Q\). If
\(q_i\notin V(J_i)\), use the perfect divisibility of \(D_i\); no
consistency condition is needed. If \(J_i\) is edgeless, put every
nonroot vertex in \(A_i\) and put its root, when present, in the same part
as in \(A_Q\dunion B_Q\). In particular, every \(J_i[A_i]\) is perfect,
and for every edge-containing piece,
\[
 \om(J_i[B_i])<\om(J_i)\leq3.
\]

The local and auxiliary partitions assign every shared root to the same
part. Consequently, the sets
\[
 A=A_Q\cup\bigcup_{i=1}^m A_i,\qquad
 B=B_Q\cup\bigcup_{i=1}^m B_i
\]
are disjoint and form a partition of \(V(J)\). Repeated applications of
the clique-pasting property show that \(J[A]\) is perfect: each local
perfect graph meets the part already pasted in either a single root or the
empty set. Moreover, \(J[B]\) is triangle-free. Indeed, each local graph
\(J_i[B_i]\) and \(Q^\ast[B_Q]\) is triangle-free, and every clique of
the rooted product is contained in an individual factor or in
\(Q^\ast\). Therefore \(\om(J[B])\leq2<3=\om(J)\).

Now suppose that \(k=2\). No \(J_i\) equals \(D_i\), because
\(\om(D_i)=3\). Use condition (iv) to partition \(Z\). Choose the local
partitions exactly as above, using condition (ii) whenever a present root
must be assigned consistently. For every edge-containing \(J_i\),
\[
 \om(J_i[B_i])<\om(J_i)\leq2,
\]
so \(B_i\) is independent; \(B_i\) is also independent when \(J_i\) is
edgeless. The same root-consistency argument shows that the resulting sets
\(A,B\) partition \(V(J)\), and clique-pasting shows that \(J[A]\) is
perfect. Finally, \(B\) is independent because every \(B_i\) and \(B_Q\)
is independent and every edge of the rooted product is contained in an
individual factor or in \(Q^\ast\). Thus
\(\om(J[B])\leq1<2=\om(J)\), completing the proof.
\end{proof}

\section{The auxiliary graphs
\texorpdfstring{\(\boldsymbol{Q_t}\)}{Qt}}\label{sec:auxiliary}

Fix an integer \(t\geq1\). Let \(P_t\) be an \(x\)-\(y\) path of length
\(2t-1\), whose internal vertices in order are
\(p_1,\ldots,p_{2t-2}\). This set is empty when \(t=1\), so
\(P_1=xy\). Define a tree \(T_t\) with vertex set
\[
 R_t=\{a,d,u,w\}\cup V(P_t)
\]
and edge set
\[
 E(T_t)=E(P_t)\cup\{ax,dy,yu,xw\}.
\]
Thus \(T_t\) consists of the path from \(a\) to \(d\) through
\(x,P_t,y\), with an additional leaf \(u\) adjacent to \(y\) and an
additional leaf \(w\) adjacent to \(x\).

Add three vertices \(b,c,v\), make them a triangle, and add exactly the
six edges
\begin{equation}\label{eq:attachment-edges}
 ab,\qquad ub,\qquad dc,\qquad wc,\qquad av,\qquad dv.
\end{equation}
Denote the resulting graph by \(Q_t\). The set \(R_t\) specifies the
vertices at which copies of \((F,r)\) will be attached. Put
\(K=\{b,c,v\}\).

The graph \(Q_t\) is illustrated in Figure \ref{fig:Qt}. The vertical
chain labelled \(P_t\) represents the variable part of the construction.

\begin{figure}[H]
\centering
\setlength{\unitlength}{0.45mm}
\begin{picture}(230,165)(-5,0)
  % Edges of the tree T_t other than the variable path P_t.
  \qbezier(78,65)(94,37.5)(110,10)        % ax
  \qbezier(130,105)(120,90)(110,75)       % dy
  \qbezier(110,75)(94,86.5)(78,98)        % yu
  \qbezier(110,10)(126,37.5)(142,65)      % xw
  % The x-y path P_t; the break represents its omitted internal vertices.
  \qbezier(110,10)(110,25)(110,40)
  \qbezier(110,55)(110,65)(110,75)
  \put(106.5,41){\(\vdots\)}
  \put(116,43){\(P_t\)}

  % Edges from b and c to the tree.
  \qbezier(45,120)(61.5,92.5)(78,65)      % ab
  \qbezier(45,120)(61.5,109)(78,98)       % ub
  \qbezier(175,120)(152.5,112.5)(130,105) % dc
  \qbezier(142,65)(158.5,92.5)(175,120)   % wc

  % The triangle K={b,c,v} and the remaining attachment edges.
  \qbezier(45,120)(110,120)(175,120)       % bc
  \qbezier(110,150)(77.5,135)(45,120)      % vb
  \qbezier(110,150)(142.5,135)(175,120)    % vc
  \qbezier(110,150)(-20,150)(78,65)        % va
  % The white gap distinguishes the crossing of bc and vd.
  \put(123.33,120){{\color{white}\circle*{5}}}
  \qbezier(110,150)(120,127.5)(130,105)    % vd

  % Vertices of R_t are open when displayed; b,c,v are filled.
  \put(78,65){\circle{6}}
  \put(130,105){\circle{6}}
  \put(110,10){\circle{6}}
  \put(110,75){\circle{6}}
  \put(78,98){\circle{6}}
  \put(142,65){\circle{6}}
  \put(45,120){\circle*{5}}
  \put(175,120){\circle*{5}}
  \put(110,150){\circle*{5}}

  \put(68,58){\(a\)}
  \put(134,108){\(d\)}
  \put(105,-1){\(x\)}
  \put(114,73){\(y\)}
  \put(81,103){\(u\)}
  \put(146,58){\(w\)}
  \put(31,124){\(b\)}
  \put(180,124){\(c\)}
  \put(114,151){\(v\)}
\end{picture}
\caption{The auxiliary graph \(Q_t\). The vertical chain labelled \(P_t\)
is the \(x\)-\(y\) path of length \(2t-1\); its omitted internal vertices
also belong to \(R_t\). The displayed vertices of \(R_t\) are open, whereas
\(b,c,v\) are filled. When \(t=1\), the vertical chain is replaced by the
edge \(xy\), giving \(Q_1\). The crossing of \(bc\) and \(vd\) is not a
vertex.}
\label{fig:Qt}
\end{figure}

For each \(z\in K\), let \(P_z\) be the unique path in \(T_t\) between
the two vertices of \(N_{Q_t}(z)\cap R_t\), and let
\(C_z=Q_t[V(P_z)\cup\{z\}]\). Thus \(C_z\) is the cycle obtained by adding
\(z\) and its two incident edges to \(P_z\).
Equivalently,
\begin{equation}\label{eq:odd-holes}
\begin{split}
 V(C_b)&=V(P_t)\cup\{a,u,b\},\\
 V(C_c)&=V(P_t)\cup\{d,w,c\},\\
 V(C_v)&=V(P_t)\cup\{a,d,v\}.
\end{split}
\end{equation}
Each \(C_z\) is an induced cycle of length \(2t+3\), and hence is an odd
hole.

We now prove the two assignment properties required by Lemma
\ref{lem:rooted-product}. We use the standard facts that forests and
bipartite graphs are perfect \cite[Section 8.1]{West2001}.

\begin{lemma}\label{lem:assignments}
For every integer \(t\geq1\), the graph \(Q_t\) and the set \(R_t\) have
the following properties.
\begin{enumerate}
  \item For every \(Z\subseteq V(Q_t)\) and
  \(M\subseteq Z\cap R_t\), there is a partition
  \(Z=A_Q\dunion B_Q\) such that
  \[
    Q_t[A_Q]\text{ is perfect},\qquad
    \om(Q_t[B_Q])\leq2,\qquad M\subseteq A_Q,
  \]
  except when \(Z=V(Q_t)\) and \(M=R_t\).
  \item For every \(Z\subseteq V(Q_t)\), there is a partition
  \(Z=A_Q\dunion B_Q\) such that \(Q_t[A_Q]\) is perfect and
  \(B_Q\) is an independent set.
\end{enumerate}
\end{lemma}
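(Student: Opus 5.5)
The approach is to use the structure of \(Q_t\) directly: \(Q_t-K=T_t\) is a tree, and each \(z\in K\) has exactly two neighbours in \(R_t\). The basic observation I will use repeatedly is the following. Let \(S\subseteq R_t\) and \(z\in K\). If the unique path \(P_z\) is not contained in \(S\), then \(Q_t[S\cup\{z\}]\) is a forest. Indeed, \(T_t[S]\) is a forest, and its two vertices adjacent to \(z\) (if both are present) lie in different components, since the only tree path joining them is \(P_z\). Adding \(z\) therefore creates no cycle. The same holds trivially when \(P_z\subseteq S\) fails because a neighbour of \(z\) is missing. Forests are perfect, and so are all induced subgraphs of perfect graphs, so in each case it suffices to exhibit \(A_Q\) inside such a forest or inside a graph shown perfect by clique-pasting.

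For part (ii), I will take \(B_Q=Z\cap\{b,x\}\) and \(A_Q=Z\setminus B_Q\). Since \(b\) and \(x\) are nonadjacent, \(B_Q\) is independent. It remains to check that \(H=Q_t[(R_t\setminus\{x\})\cup\{c,v\}]\) is perfect. Removing \(x\) isolates \(a\) and \(w\) in \(T_t\), and every \(P_z\) passes through \(x\). Hence the only cycle in \(H\) is the triangle \(cvd\): the vertex \(v\) has tree neighbours \(a\) and \(d\), and \(c\) has tree neighbours \(d\) and \(w\). So \(H\) is obtained from a forest by pasting the triangle \(\{c,v,d\}\) along single vertices. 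By the clique-pasting fact, \(H\) is perfect, and so is its induced subgraph \(Q_t[A_Q]\).

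For part (i), I split into two cases.

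\emph{Case 1: some vertex of \(K\) is missing from \(Z\).} Put \(B_Q=Z\cap K\), which has at most two vertices and so satisfies \(\om\leq2\). Put \(A_Q=Z\cap R_t\). This is a forest containing \(M\).

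\emph{Case 2: \(K\subseteq Z\).} Since we are not in the exceptional case, there is a tree vertex \(s\) with either \(s\notin Z\) or \(s\in Z\setminus M\). I choose \(z\in K\) with \(s\in V(P_z)\), which is possible since the three paths cover \(R_t\):
\begin{itemize}
  \item \(u,a\in P_b\);
  \item \(w,d\in P_c\);
  \item every vertex of \(P_t\) lies in all three paths.
\end{itemize}
For definiteness, take \(z=b\) when \(s=a\) and \(z=c\) when \(s=d\). Then set
\[
 B_Q=(K\setminus\{z\})\cup(\{s\}\cap Z),\qquad A_Q=Z\setminus B_Q.
\]
We have \(M\subseteq A_Q\) because \(s\notin M\). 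Moreover \(A_Q\subseteq (R_t\setminus\{s\})\cup\{z\}\), so \(A_Q\) induces a forest by the observation above.

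The remaining point, which I expect to be the only delicate step, is checking that \(B_Q\) is triangle-free. A triangle in \(B_Q\) would require \(s\) to be adjacent to both vertices of \(K\setminus\{z\}\). Only \(a\) (adjacent to \(b\) and \(v\)) and \(d\) (adjacent to \(c\) and \(v\)) have two neighbours in \(K\). The choices \(z=b\) for \(s=a\) and \(z=c\) for \(s=d\) leave \(B_Q=\{c,v,a\}\) and \(\{b,v,d\}\) respectively, and \(ac,bd\notin E(Q_t)\). Every other tree vertex has at most one neighbour in \(K\). Hence \(\om(Q_t[B_Q])\leq2\), completing the plan.
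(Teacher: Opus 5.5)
Your proof is correct. Part (i) is essentially the paper's argument. It uses the same unique-path observation that \((R_t\setminus\{s\})\cup\{z\}\) induces a forest whenever \(s\in V(P_z)\). Your choice of \(z\) agrees with the paper's table for \(u,a,w,d\), and you make the same check that \(s\) is not adjacent to both vertices of \(K\setminus\{z\}\). The only change is that you merge the paper's two subcases (a tree vertex missing from \(Z\), and \(Z=V(Q_t)\) with \(M\neq R_t\)) into one by setting \(B_Q=(K\setminus\{z\})\cup(\{s\}\cap Z)\).

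Part (ii) takes a genuinely different route. The paper writes down an explicit proper 3-colouring \(L_1,L_2,L_3\) of \(Q_t\) and takes \(B_Q=Z\cap L_3\) with \(L_3=\{u,w,v\}\), so that \(Q_t[A_Q]\) is bipartite. You instead remove the nonadjacent pair \(\{b,x\}\) and show that the remaining graph has the triangle \(cvd\) as its only cycle. The paper's choice buys two things at once. Perfection of \(A_Q\) is immediate from bipartiteness, and the same colouring certifies \(\omega(Q_t)=3\), which the paper needs immediately afterwards as the hypothesis \(\omega(Q^\ast)\le 3\) of the rooted product lemma. Your version stays within the tree/unique-path reasoning of part (i), but it relies on clique-pasting (or chordality) and does not by itself supply that clique bound.

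One small point of phrasing: \(H\) is not literally a forest with a triangle pasted on along \(d\). The vertices \(a\) and \(w\) lie in \(T_t-x\) and are joined to \(v\) and \(c\), so those edges would cross between the two pasted pieces. Do the pasting block by block: the component of \(T_t-x\) containing \(d\), then the triangle at \(d\), then the pendant edges \(va\) and \(cw\) at \(v\) and \(c\). Alternatively, just note that every block of \(H\) is an edge or the triangle, so \(H\) is chordal and hence perfect.
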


\begin{proof}
We first prove (i). Let \(Z\subseteq V(Q_t)\), and let
\(M\subseteq Z\cap R_t\). Suppose first that \(Z\neq V(Q_t)\), and
choose \(q\in V(Q_t)\setminus Z\).

If \(q\in K\), define on \(Q_t-q\)
\[
 A_0=R_t,\qquad B_0=K\setminus\{q\}.
\]
Then \(Q_t[A_0]=T_t\) is a tree and \(|B_0|=2\). Consequently,
\[
 A_Q=A_0\cap Z,\qquad B_Q=B_0\cap Z
\]
has all the required properties, including \(M\subseteq A_Q\).

Now suppose that \(q\in R_t\). Choose \(z=z(q)\in K\) according to
\begin{equation}\label{eq:z-of-q}
\begin{array}{c|c}
q&z(q)\\ \hline
q\in\{a,u\}&b\\
q\in\{d,w\}&c\\
q\in V(P_t)&v.
\end{array}
\end{equation}
In every case, \(q\) lies on \(P_z\). On \(Q_t-q\), put
\[
 A_0=(R_t\setminus\{q\})\cup\{z\},\qquad
 B_0=K\setminus\{z\}.
\]
The graph \(T_t-q\) is a forest. If \(q\) is one of the two neighbours of
\(z\) in \(T_t\), then \(z\) has at most one neighbour in \(T_t-q\).
Otherwise, its two neighbours lie in different components of \(T_t-q\),
because \(q\in V(P_z)\). Adding \(z\) therefore creates no cycle, and
\(Q_t[A_0]\) is a forest. Since \(|B_0|=2\), the restricted sets
\[
 A_Q=A_0\cap Z,\qquad B_Q=B_0\cap Z
\]
give the desired assignment. Every present vertex of \(R_t\) belongs to
\(A_Q\), so \(M\subseteq A_Q\).

It remains to consider \(Z=V(Q_t)\) and \(M\neq R_t\). Choose
\(q\in R_t\setminus M\), use \eqref{eq:z-of-q}, and put
\[
 A_Q=(R_t\setminus\{q\})\cup\{z(q)\},\qquad
 B_Q=(K\setminus\{z(q)\})\cup\{q\}.
\]
The same unique-path argument shows that \(Q_t[A_Q]\) is a forest, and
\(M\subseteq A_Q\). The two vertices of \(K\setminus\{z(q)\}\) are
adjacent. Hence a triangle in \(Q_t[B_Q]\) could exist only if \(q\)
were adjacent to both of them. This does not occur: if
\(q\in\{a,u\}\), the other two vertices are \(c,v\); if
\(q\in\{d,w\}\), they are \(b,v\); and if \(q\in V(P_t)\), they are
\(b,c\). The edge list \eqref{eq:attachment-edges} shows in each case
that \(q\) is not adjacent to both vertices. Thus \(Q_t[B_Q]\) is
triangle-free.

The excluded state is infeasible. Indeed, suppose that
\(Z=V(Q_t)\), \(M=R_t\), and a partition with the asserted properties
exists. Since every vertex of \(R_t\) belongs to \(A_Q\), the induced odd
holes \(C_b,C_c,C_v\) in \eqref{eq:odd-holes} force \(b,c,v\),
respectively, into \(B_Q\). But these three vertices form a triangle,
contrary to \(\om(Q_t[B_Q])\leq2\). This proves (i).

For (ii), the following are the colour classes of a proper 3-colouring of
\(Q_t\):
\begin{equation}\label{eq:coloring}
\begin{split}
 L_1&=\{a,c,y\}\cup\{p_i:i\text{ is odd}\},\\
 L_2&=\{b,d,x\}\cup\{p_i:i\text{ is even}\},\\
 L_3&=\{u,w,v\}.
\end{split}
\end{equation}
For any \(Z\subseteq V(Q_t)\), set
\[
 B_Q=Z\cap L_3,\qquad A_Q=Z\cap(L_1\cup L_2).
\]
Then \(B_Q\) is independent and \(Q_t[A_Q]\) is bipartite, hence
perfect. This proves (ii).
\end{proof}

The colouring \eqref{eq:coloring} gives \(\om(Q_t)\leq3\), while
\(b,c,v\) induce a triangle. Therefore
\begin{equation}\label{eq:Q-clique-number}
 \om(Q_t)=3.
\end{equation}

\section{The infinite family}\label{sec:family}

For each \(t\geq1\), take one vertex-disjoint copy of the rooted graph
\((F,r)\) for every vertex of \(R_t\), identify its root with that vertex
of \(Q_t\), and add no other edges. Equivalently, take the rooted product
of \(Q_t\) with a copy of \((F,r)\) at every vertex of \(R_t\) and a
single-vertex factor at every vertex of \(K\). Denote the resulting graph
by \(G_t\), and denote its vertex \(v\) by \(v_t\).

There are
\[
 |R_t|=2t+4
\]
copies of \((F,r)\), while
\[
 |V(Q_t)|=2t+7,\qquad |E(Q_t)|=2t+12.
\]
Each copy of \(F\) contributes 14 vertices in addition to its identified
root and contributes 51 edges. Hence
\begin{equation}\label{eq:parameters}
\begin{split}
 |V(G_t)|&=(2t+7)+14(2t+4)
           =93+30(t-1),\\
 |E(G_t)|&=(2t+12)+51(2t+4)
           =320+104(t-1).
\end{split}
\end{equation}

\begin{proposition}\label{prop:parameters}
For every \(t\geq1\), the graph \(G_t\) has clique number three, and
\(v_t\) is a bisimplicial vertex of degree four.
\end{proposition}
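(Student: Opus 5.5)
The plan is to use the structure of the rooted product directly. Every edge of \(G_t\) lies either in \(Q_t\) or in a single copy of \(F\). This holds because the copies of \(F\) meet \(Q_t\) and each other only in their identified roots, and no other edges are added. Consequently every clique of \(G_t\) lies inside \(Q_t\) or inside one copy of \(F\). A clique using vertices from two different pieces would need an edge between non-root vertices of different pieces, or between a non-root vertex of a copy and a vertex of \(Q_t\) other than its root, and no such edges exist. Hence \(\om(G_t)=\max\{\om(Q_t),\om(F)\}\). By \eqref{eq:Q-clique-number} and Lemma \ref{lem:rooted-factor}(i) both values equal three, so \(\om(G_t)=3\). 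This is the same observation already used at the start of the proof of Lemma \ref{lem:rooted-product}; here we only need to record that the upper bound is attained, for instance by the triangle \(b,c,v\).

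Next I would compute \(N_{G_t}(v_t)\). Since \(v\in K\) receives a single-vertex factor, no copy of \(F\) is attached at \(v\), and the neighbourhood of \(v\) in \(G_t\) equals its neighbourhood in \(Q_t\). Reading off the triangle \(K\) and the edge list \eqref{eq:attachment-edges}, the neighbours of \(v\) are \(b\), \(c\), \(a\) and \(d\). No tree edge of \(T_t\) is incident with \(v\), so these are all of them and \(v_t\) has degree four.

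For bisimplicity I would exhibit the two cliques \(\{b,c\}\) and \(\{a,d\}\)? That fails if \(a\) and \(d\) are nonadjacent, and in \(T_t\) they are at distance \(2t+1\geq3\). So I would use a different pairing and check which pairs are edges. The pair \(bc\) is an edge, and so is \(ab\) by \eqref{eq:attachment-edges}, and \(dc\) is an edge as well. Thus \(\{a,b\}\) and \(\{c,d\}\) are cliques whose union is \(N(v)\), which proves that \(v\) is bisimplicial.

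The only step needing care is verifying that neighbourhoods in \(G_t\) are not enlarged by the attached copies, which follows from \(v\notin R_t\). The argument also requires choosing the correct pairing of the four neighbours into two cliques, since \(a\) and \(d\) are nonadjacent.
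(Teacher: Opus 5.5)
Your proposal is correct and follows essentially the same route as the paper. Every clique of the rooted product lies in \(Q_t\) or in a single copy of \(F\), which gives \(\om(G_t)=3\). The neighbourhood \(N_{G_t}(v_t)=\{a,b,c,d\}\) induces the path \(a-b-c-d\), so it is the union of the two cliques \(\{a,b\}\) and \(\{c,d\}\).
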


\begin{proof}
A nonroot vertex of an attached copy of \(F\) has no neighbour outside
that copy. Thus every clique of \(G_t\) lies in \(Q_t\) or in one copy of
\(F\). Lemma \ref{lem:rooted-factor} and
\eqref{eq:Q-clique-number} give \(\om(G_t)=3\).

The only neighbours of \(v_t\) are \(a,b,c,d\), and they induce the path
\(a-b-c-d\). Therefore
\[
 N_{G_t}(v_t)=\{a,b\}\cup\{c,d\}
\]
is a union of two cliques. Hence \(v_t\) is bisimplicial and has degree
four.
\end{proof}

\begin{lemma}\label{lem:no-division}
For every \(t\geq1\), the graph \(G_t\) has no perfect division.
\end{lemma}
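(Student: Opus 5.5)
I will argue by contradiction: suppose \(V(G_t)=A\dunion B\) is a perfect division. By Proposition \ref{prop:parameters}, \(\om(G_t)=3\), so \(G_t[A]\) is perfect and \(G_t[B]\) is triangle-free. The plan has two steps. First, show that every vertex of \(R_t\) lies in \(A\). Second, apply the odd-hole argument already used for the excluded case of Lemma \ref{lem:assignments}(i).

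For the first step, fix \(q\in R_t\) and let \(F_q\) be the copy of \(F\) attached at \(q\), with root identified with \(q\). I restrict the division to \(V(F_q)\), setting \(A_q=A\cap V(F_q)\) and \(B_q=B\cap V(F_q)\).
\begin{itemize}
  \item The graph \(F_q[A_q]\) is an induced subgraph of the perfect graph \(G_t[A]\), so it is perfect.
  \item The graph \(F_q[B_q]\) is an induced subgraph of the triangle-free graph \(G_t[B]\), so \(\om(F_q[B_q])\leq2<3=\om(F_q)\), using Lemma \ref{lem:rooted-factor}(i).
\end{itemize}
Hence \((A_q,B_q)\) is a perfect division of \(F_q\cong F\). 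Lemma \ref{lem:rooted-factor}(ii) then puts the root \(q\) in \(A_q\subseteq A\). The only subtle point is that the restriction is a perfect division of \(F_q\), and this holds because \(\om(F_q)\) equals \(\om(G_t)\).

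For the second step, take each \(z\in K\). The odd hole \(C_z\) of \eqref{eq:odd-holes} is an induced subgraph of \(Q_t\). It is also induced in \(G_t\), because the rooted product adds no edges between vertices of \(Q_t\). All vertices of \(C_z\) other than \(z\) lie in \(R_t\subseteq A\). If \(z\) were in \(A\), then \(G_t[A]\) would contain the odd hole \(C_z\) and would not be perfect, by Theorem \ref{thm:spgt} or simply because an odd hole has \(\chi=3>2=\om\). So \(b,c,v\in B\). These three vertices form a triangle, which contradicts \(\om(G_t[B])\leq2\).

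There is no real obstacle here, since the heavy lifting is done by Lemma \ref{lem:rooted-factor}. I only need to check two things carefully. The restriction to each copy of \(F\) must be a genuine perfect division. The cycles \(C_z\) must remain induced in \(G_t\). Both follow directly from the definition of the rooted product and from the equality \(\om(F)=\om(G_t)=3\).
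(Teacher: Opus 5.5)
Your proposal is correct and follows the paper's proof essentially step for step. The paper's argument has the same two stages: it restricts the division to each attached copy of \(F\) to obtain a perfect division of that copy (using \(\om(F)=\om(G_t)=3\)), which forces \(R_t\subseteq A\) by Lemma \ref{lem:rooted-factor}, and then uses the induced odd holes \(C_b,C_c,C_v\) to force the triangle \(\{b,c,v\}\) into \(B\).
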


\begin{proof}
Fix \(t\geq1\), and suppose, to the contrary, that \((A,B)\) is a
perfect division of \(G_t\). Since \(\om(G_t)=3\), the graph \(G_t[B]\)
is triangle-free.

Restrict the partition to any attached copy \(F'\) of \(F\). The graph
\(F'[A\cap V(F')]\) is an induced subgraph of the perfect graph
\(G_t[A]\), and hence is perfect. Moreover,
\[
 \om\bigl(F'[B\cap V(F')]\bigr)\leq2<3=\om(F').
\]
The restriction is therefore a perfect division of \(F'\). By Lemma
\ref{lem:rooted-factor}, its root belongs to \(A\). This holds for every
attached copy, so \(R_t\subseteq A\).

The rooted attachments add no edge between vertices of \(Q_t\). Thus
\(C_b,C_c,C_v\) remain induced odd holes in
\(G_t\). If \(b\in A\), every vertex of \(C_b\) belongs to \(A\),
contradicting the perfection of \(G_t[A]\). Hence \(b\in B\). The same
argument using \(C_c\) and \(C_v\) gives \(c,v\in B\). But \(b,c,v\)
form a triangle, a contradiction. Therefore \(G_t\) has no perfect
division.
\end{proof}

\begin{lemma}\label{lem:proper-division}
For every \(t\geq1\), every proper induced subgraph of \(G_t\) with at
least one edge has a perfect division.
\end{lemma}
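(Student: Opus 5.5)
The plan is to derive the statement directly from Lemma \ref{lem:rooted-product}. We take \(Q^\ast=Q_t\) and \(R=R_t=\{q_1,\ldots,q_m\}\) with \(m=2t+4\). For each \(q_i\) we let \((D_i,z_i)\) be the attached copy of \((F,r)\). By construction, \(G_t\) is exactly the graph \(C\) of Lemma \ref{lem:rooted-product}: the copies of \(F\) are vertex-disjoint, each is identified with \(Q_t\) only at its root, single-vertex factors sit at \(b,c,v\), and no other edges are added. It then remains to check the hypotheses of that lemma; its conclusion is precisely the statement of Lemma \ref{lem:proper-division}.

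We verify the hypotheses in order. The bound \(\om(Q_t)\leq3\) is \eqref{eq:Q-clique-number}. Condition (i) follows from Lemma \ref{lem:rooted-factor}(i),(ii): each copy of \(F\) has clique number three, is perfectly divisible, and every perfect division puts its root in the perfect part. These properties are invariant under the isomorphism between \(F\) and its copy, which sends \(r\) to \(z_i\). Condition (ii) is Lemma \ref{lem:rooted-factor}(iii), again transported along the isomorphism. Conditions (iii) and (iv) are exactly Lemma \ref{lem:assignments}(i) and (ii) for \(Q_t\) and \(R_t\), including the same exceptional case \(Z=V(Q_t)\), \(M=R_t\).

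The only point needing care is the definition of \(C\) in Lemma \ref{lem:rooted-product}. That lemma identifies \(z_i\) with \(q_i\) and places a single-vertex factor at every vertex of \(V(Q^\ast)\setminus R=K\). This is precisely the equivalent description of \(G_t\) given at the start of this section, so no real obstacle remains. All of the substantive work has already been done in the rooted product lemma, the computational Lemma \ref{lem:rooted-factor}, and the symbolic Lemma \ref{lem:assignments}. Applying Lemma \ref{lem:rooted-product} then shows that every proper induced subgraph of \(G_t\) with at least one edge has a perfect division. Since the proof is uniform in \(t\), it holds for all \(t\geq1\).
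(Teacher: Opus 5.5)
Your proposal is correct and is essentially the paper's proof: apply Lemma \ref{lem:rooted-product} with \(Q^\ast=Q_t\), \(R=R_t\) and a copy of \((F,r)\) at each vertex of \(R_t\). Conditions (i)--(ii) come from Lemma \ref{lem:rooted-factor}, conditions (iii)--(iv) from Lemma \ref{lem:assignments}, and \(\om(Q_t)\leq3\) from \eqref{eq:Q-clique-number}.
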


\begin{proof}
Apply Lemma \ref{lem:rooted-product} with base graph \(Q_t\), with a copy
of \((F,r)\) at each vertex of \(R_t\). Lemma
\ref{lem:rooted-factor} gives conditions
(i) and (ii), while Lemma \ref{lem:assignments} gives conditions (iii)
and (iv). Finally, \eqref{eq:Q-clique-number} gives
\(\om(Q_t)\leq3\). The conclusion follows from Lemma
\ref{lem:rooted-product}.
\end{proof}

\begin{proof}[Proof of Theorem \ref{thm:main}]
Fix \(t\geq1\). Lemma \ref{lem:no-division} shows that \(G_t\) is not
perfectly divisible. Let \(H\) be a proper induced subgraph of \(G_t\),
and let \(J\) be an induced subgraph of \(H\) with at least one edge.
Then \(J\) is a proper induced subgraph of \(G_t\), so Lemma
\ref{lem:proper-division} gives a perfect division of \(J\). Hence \(H\)
is perfectly divisible, and therefore \(G_t\) is MNPD.

Proposition \ref{prop:parameters} gives the clique number and the
bisimplicial vertex. The formulas in \eqref{eq:parameters} give the
asserted orders and sizes. Since the orders are strictly increasing with
\(t\), the graphs \(G_t\) are pairwise nonisomorphic. This completes the
proof.
\end{proof}

When \(t=1\), the path \(P_1\) is the edge \(xy\). The graph \(Q_1\)
is therefore the nine-vertex auxiliary graph with 14 edges used to obtain
the smallest member of the family. In particular,
\[
 |V(G_1)|=93,\qquad |E(G_1)|=320.
\]

\section{The prescribed-vertex consequence}\label{sec:prescribed}

Hu, Xu and Zhuang asked whether, for every perfectly divisible graph
\(H\) and every vertex \(z\in V(H)\), there is a perfect division
\((A,B)\) of \(H\) such that \(z\in A\)
\cite[Problem 4.1]{HuXuZhuang2026}. The family above gives infinitely
many negative examples.

\begin{proof}[Proof of Corollary \ref{cor:intro-prescribed}]
For \(t\geq1\), let
\[
 H_t=G_t-v_t.
\]
Since \(H_t\) is a proper induced subgraph of the MNPD graph \(G_t\), it
is perfectly divisible. Moreover, \(H_t\) contains a full copy of \(F\)
and is an induced subgraph of \(G_t\), so \(\om(H_t)=3\). Consider any
perfect division \((A,B)\) of \(H_t\). Its restriction to each full copy
of \(F\) is therefore a perfect division of that copy, so Lemma
\ref{lem:rooted-factor} puts every vertex of \(R_t\) in \(A\). The induced
odd holes \(C_b\) and \(C_c\) then force \(b,c\in B\). Since
\(bc\in E(H_t)\), these are adjacent vertices that are excluded from the
perfect part in every perfect division.

Finally, \(|V(H_t)|=|V(G_t)|-1\) is strictly increasing with \(t\), so
the graphs \(H_t\) are pairwise nonisomorphic. Taking the prescribed
vertex to be \(b\) gives the asserted negative answer.
\end{proof}

\section{Concluding remarks}\label{sec:conclusion}

Theorem \ref{thm:main} shows that the presence of a bisimplicial vertex
does not by itself prevent a graph from being MNPD. The first member has
93 vertices, but we make no claim that its order is minimum. The same
construction also gives infinitely many perfectly divisible graphs in
which a prescribed vertex cannot belong to the perfect part of any
perfect division.

The proof separates its finite and symbolic components. The exact
verification establishes Lemma \ref{lem:rooted-factor}, a
computer-assisted statement about one 15-vertex rooted graph. Lemmas
\ref{lem:rooted-product} and \ref{lem:assignments} then connect that finite
input to the whole family. The construction of \(Q_t\), the parameter
formulas, and the proof for every \(t\geq1\) are symbolic. In particular,
the infinite-family conclusion is not inferred from computations for
finitely many values of \(t\).

\appendix

\section{Exact verification and reproducibility}\label{app:verification}

The proof-critical computation is confined to Lemma
\ref{lem:rooted-factor}. The verification program uses only the Python
standard library and performs the following steps.

\begin{enumerate}
  \item Read both the \texttt{graph6} encoding and the complete 15-row
  adjacency table for \(F\) from the manuscript source, verify that they
  define the same labelled graph and agree with a fixed reference encoding,
  and represent vertex subsets by bit masks.
  \item Compute clique numbers exactly, using a memoised branching
  recurrence.
  \item Recognise a perfect induced subgraph by detecting all odd holes
  and odd antiholes and applying the Strong Perfect Graph Theorem.
  \item For every \(S\subseteq V(F)\) such that \(F[S]\) has an edge,
  enumerate every \(A\subseteq S\) and test
  \[
    F[A]\text{ perfect}\qquad\text{and}\qquad
    \om(F[S\setminus A])<\om(F[S]).
  \]
  \item Check the forced-root assertion for \(F\), and check both
  prescribed parts for the root in every proper root-containing induced
  subgraph with at least one edge.
  \item Cross-check the computed data for \(F\) by an independent method
  that tests \(\chi(H)=\om(H)\) for every induced subgraph \(H\).
  \item As additional consistency checks, reconstruct \(Q_1\) and the
  93-vertex graph \(G_1\), and compare the construction with the expanded
  adjacency list and extended \texttt{graph6} encoding in the ancillary
  files.
  \item Perform negative-control tests that reject Python optimisation
  mode and detect deliberate alterations to the adjacency list, the
  \texttt{graph6} encoding of \(F\), and the adjacency table of \(F\).
\end{enumerate}

The exact counts used in Lemma \ref{lem:rooted-factor} are collected in
Table \ref{tab:verification-counts}.

\begin{table}[H]
\centering
\caption{Exact finite counts used in the proof.}
\label{tab:verification-counts}
\begin{tabular}{lr}
\toprule
quantity & count\\
\midrule
perfect divisions of \(F\) & 508\\
sets \(S\subseteq V(F)\) for which \(F[S]\) has an edge & 32,619\\
proper \(F[X]\) with \(r\in X\) and an edge & 16,377\\
\bottomrule
\end{tabular}
\end{table}

All 508 divisions of \(F\) put the root in the perfect part. Each of the
16,377 proper induced subgraphs \(F[X]\) with \(r\in X\) and an edge has
one division with \(r\) in the perfect part and another with \(r\) in the
other part. Two exact implementations agree on every reported value. One
uses odd-hole and odd-antihole detection together with Theorem
\ref{thm:spgt}; the other computes \(\chi\) and \(\om\) exactly on every
induced subgraph.

The checks of \(Q_1\) and \(G_1\) are regression and representation
checks, not a substitute for the symbolic argument. Lemma
\ref{lem:assignments} proves the required auxiliary properties for every
\(t\geq1\), and Lemma \ref{lem:rooted-product} proves that every proper
induced subgraph has a perfect division without enumerating the vertex
subsets of \(G_t\).

The complete finite computation can be reproduced from the root of the
source archive by running
\begin{verbatim}
./anc/supplement/verifier/reproduce.sh
\end{verbatim}
The script checks the integrity of its inputs, reruns all finite
verifications, and exits with a nonzero status if any check fails.
Machine-readable results and SHA-256 checksums are included in the
ancillary files.

\section{Adjacency list of the rooted graph}\label{app:rooted-adjacency}

For completeness, Table \ref{tab:rooted-adjacency} gives the adjacency
list of \(F\) in \texttt{graph6} label order. The root is vertex \(3\).

\begin{table}[H]
\centering
\small
\renewcommand{\arraystretch}{1.08}
\caption{Adjacency list of the 15-vertex rooted graph \((F,r)\).}
\label{tab:rooted-adjacency}
\begin{tabular}{c@{\quad}l@{\hspace{1.2cm}}c@{\quad}l}
\toprule
vertex & neighbours & vertex & neighbours\\
\midrule
0  & 1, 4, 5, 6, 9, 10, 11
& 8  & 1, 2, 3, 4, 11, 14\\
1  & 0, 2, 4, 5, 6, 8, 11
& 9  & 0, 2, 4, 5, 7, 10, 12, 14\\
2  & 1, 3, 5, 7, 8, 9, 12
& 10 & 0, 3, 9, 11, 12, 14\\
\(\mathbf{3}\) & 2, 5, 6, 7, 8, 10, 11, 12, 13, 14
& 11 & 0, 1, 3, 7, 8, 10, 13\\
4  & 0, 1, 8, 9, 12, 13, 14
& 12 & 2, 3, 4, 6, 9, 10, 13\\
5  & 0, 1, 2, 3, 9, 13, 14
& 13 & 3, 4, 5, 11, 12\\
6  & 0, 1, 3, 7, 12, 14
& 14 & 3, 4, 5, 6, 8, 9, 10\\
7  & 2, 3, 6, 9, 11
& &\\
\bottomrule
\end{tabular}
\end{table}

\section*{Data and code availability}

The data and source code supporting the finite verification are available
as ancillary files to arXiv:2607.25412 at
\url{https://arxiv.org/abs/2607.25412}. They include the expanded
93-vertex adjacency list of \(G_1\), the exact verification code,
independently implemented cross-checks, machine-readable results, and
reproduction instructions. The fixed rooted graph is also determined
completely by its \texttt{graph6} encoding, its root \(r=3\), and the
adjacency table in Appendix \ref{app:rooted-adjacency}.


{\footnotesize
\begin{thebibliography}{99}
\setlength{\itemsep}{0pt}
\setlength{\parsep}{0pt}
\setlength{\topsep}{0pt}

\bibitem{ChudnovskyRobertsonSeymourThomas2006}
M. Chudnovsky, N. Robertson, P. Seymour and R. Thomas,
The strong perfect graph theorem,
Ann. of Math. (2) 164 (1) (2006) 51--229.
\url{https://doi.org/10.4007/annals.2006.164.51}.

\bibitem{ChudnovskySeymour2023}
M. Chudnovsky and P. Seymour,
Even-hole-free graphs still have bisimplicial vertices,
J. Combin. Theory Ser. B 161 (2023) 331--381.
\url{https://doi.org/10.1016/j.jctb.2023.02.009}.

\bibitem{ChudnovskySivaraman2019}
M. Chudnovsky and V. Sivaraman,
Perfect divisibility and 2-divisibility,
J. Graph Theory 90 (1) (2019) 54--60.
\url{https://doi.org/10.1002/jgt.22367}.

\bibitem{GodsilMcKay1978}
C. D. Godsil and B. D. McKay,
A new graph product and its spectrum,
Bull. Aust. Math. Soc. 18 (1) (1978) 21--28.
\url{https://doi.org/10.1017/S0004972700007760}.

\bibitem{Hoang2018}
C. T. Ho\`ang,
On the structure of (banner, odd hole)-free graphs,
J. Graph Theory 89 (4) (2018) 395--412.
\url{https://doi.org/10.1002/jgt.22258}.

\bibitem{Hoang2026}
C. T. Ho\`ang,
On the structure of perfectly divisible graphs,
Discrete Math. 349 (2) (2026) 114809.
\url{https://doi.org/10.1016/j.disc.2025.114809}.

\bibitem{HuXuZhuang2026}
Q. Hu, B. Xu and M. Zhuang,
Some properties of minimally nonperfectly divisible graphs,
arXiv preprint arXiv:2603.01967 (2026).
\url{https://doi.org/10.48550/arXiv.2603.01967}.

\bibitem{McKayPiperno2014}
B. D. McKay and A. Piperno,
Practical graph isomorphism, II,
J. Symbolic Comput. 60 (2014) 94--112.
\url{https://doi.org/10.1016/j.jsc.2013.09.003}.

\bibitem{Trotignon2015}
N. Trotignon,
Perfect graphs,
in L. W. Beineke and R. J. Wilson (eds.),
Topics in Chromatic Graph Theory,
Cambridge University Press, Cambridge, 2015, 137--160.
\url{https://doi.org/10.1017/CBO9781139519793.010}.

\bibitem{West2001}
D. B. West,
Introduction to Graph Theory,
2nd ed., Prentice Hall, Upper Saddle River, NJ, 2001.

\end{thebibliography}
}
\end{document}